\newtheorem{step}{Step}
\DeclareMathOperator{\Span}{Span}
\DeclareMathOperator{\Supp}{Supp}
\DeclareMathOperator{\Ker}{Ker}
\DeclareMathOperator{\Hom}{Hom}
\newtheorem{thm}{\bf Theorem}[section]
\newtheorem{lem}[thm]{\bf Lemma}
\theoremstyle{definition}
\newtheorem{rem}[thm]{\bf Remark}
\newtheorem{exa}[thm]{\bf Example}
\title{On the weak Lefschetz Property of graded modules over $K[x,y]$}
\keywords{Lefschetz properties, monomial ideals, indecomposable module}
\author{Giuseppe Favacchio}
\address{%
Dipartimento di Matematica\\
Universit$\grave{a}$ degli studi di Catania\\}
\email{favacchio@dmi.unict.it}
\author{Phong Dinh Thieu}
\address{%
Institut f\"ur Mathematik\\
Universit\"at Osnabr\"uck\\}
\email{pthieudi@uni-osnabrueck.de}
\date{2012/03/25}
\begin{document}

\maketitle

\begin{abstract}
It is known that graded cyclic modules over $S=K[x,y]$ have the Weak Lefschetz Property (WLP). This is not true for non-cyclic modules over $S$. The purpose of this note is to study which conditions on $S$-modules ensure the WLP. We give an algorithm to test the WLP for graded modules with fixed Hilbert function. In particular, we prove that indecomposable graded modules over $S$ with the Hilbert function $(h_0,h_1)$ have the WLP.
\end{abstract}

\section{Introduction}
\hspace{18pt} Let $S$ be the standard graded polynomial ring over a field $K$ of characteristic zero. Let $M$ be a standard graded module over $S$. The module $M$ is said to have the \textit{Weak Lefschetz Property} (WLP for short) if there exists a linear form  $\ell\in S_1$, called \textit{Lefschetz element}, such that for each degree $i$, the multiplication map $ \times\ell : M_i \rightarrow M_{i+1}$ has maximal rank, i.e., the map is either injective or surjective.

The Weak Lefschetz Property has been studied extensively for especially the relation to the Hilbert function (see, e.g., \cite{HMNW}, \cite{MZ}, and \cite{ZZ} for more details).  Up to now, most of the known results about the WLP concern standard graded Artinian $K$-algebra over $S$. It is not known much about the WLP for standard graded modules over $S$, so the case of low dimension is still interesting.

In this paper, we study the WLP for standard graded modules over the standard graded polynomial ring $S=K[x,y]$, where $K$ is an infinite field. We are interested in conditions ensuring the WLP for a graded module over $S$. It is known that cyclic $S$-modules have the WLP and this is not true for non-cyclic $S$-modules, as we see in the following example:
\begin{exa}
Let $M=S/I_1\oplus S/I_2$ be a standard graded module over $S$, where $I_1=(x^2, xy, y^2)$ and $I_2=(x, y)$. The Hilbert function $HF_M$ of $M$ is given by $HF_M(0)=2$, $ HF_M(1)=2$ and zero otherwise. The multiplication by any generic linear form from $M_0$ to $M_1$ can not be injective because it is not injective on the second component, nor surjective because it is not surjective on the first component.
\end{exa}
One more example to see that there exists a module with the same Hilbert function as above and it has the WLP:
\begin{exa}
Let $M=S/I_1\oplus S/I_2$, where $I_1 = I_2=(x, y^2)$. The Hilbert function of $M$ is $HF_M(0)=2$, $ HF_M(1)=2$ and zero otherwise. The multiplication by $y$ is injective (and surjective).
\end{exa}

In Section \ref{sec dete} we study the WLP of graded modules in the case the Hilbert functions are nonzero only in two consecutive degrees. We present conditions of concrete matrices to ensure the WLP. Moreover, we give an algorithm in Section \ref{sec algo} to test the WLP for fixed graded modules. As an application, we prove in Section \ref{sec inde} that indecomposable graded modules over $S$ with Hilbert function $(h_0,h_1)$ have the WLP. We also find out an equivalent condition to ensure the WLP for indecomposable graded modules in the general situation and construct an example in which an indecomposable graded module with a non-decreasing Hilbert function does not have the WLP.
\section{Determinant conditions to ensure the WLP}
\label{sec dete}
\paragraph{}
Let $S=K[x,y]$ be the standard graded polynomial ring over an infinite field $K$. Let $M$ be a standard graded module over $S$. We study in this section the WLP of $M$ in the case the Hilbert function of $M$ is  $HF_M=(h_0, h_1)$, where $1\leq h_0\leq h_1$.
\begin{rem}
If the Hilbert function of $M$ is  $HF_M=(h_0, h_1)$, where $h_0=h_1=n\geq 1$ and $M$ has a minimal generator of degree 1 then $M$ does not have the WLP. In fact, the vector space generated by $\ell M_0$, where  $\ell$ is a general linear form, has dimension strictly less than $n$. Therefore, the multiplication map by a general linear form can not be injective or surjective.
\end{rem}
As noted above, we only need to study the case where $M$ is minimally generated by elements of degree $0$.

\begin{lem}\label{l1}
Let $M$ be a finitely generated standard graded module with a minimal system of generators $e_1, \ldots, e_n$ of degree $0$ and the Hilbert function $HF_M=(h_0,h_1)$, where $n=h_0\leq h_1$. If $M$ has the WLP then there exists a linearly independent set in $M_1$ of the form $\{z_1e_1, \ldots, z_ne_n\},$ where $z_i\in \{x,y\}$ for $1\leq i\leq n$.
\begin{proof}
Since $M$ has the WLP and $h_0\leq h_1$, the multiplication map by a Lefschetz element is injective. This is also true for every submodule of $M$. We prove the statement by induction on $n$.

For the case $n=1$, since $M$ has the WLP, one of $xe_1$ and $ye_1$ must be non-zero and the statement holds obviously.

Assume that the statement holds for $n=1,\ldots, k$. We turn to prove that it is true for the case $n=k+1$. Observe that $N= (e_1, \ldots, e_k)$ is a submodule of $M$ and $HF_N=(k,k')$. Since $M$ has the WLP, $N$ has the WLP and $k\leq k'$. By the induction hypothesis, we can choose a linearly independent set of the form
$A=\{ z_1e_1, \ldots, z_ke_k \}$ in $N_1$ with $z_i\in \{x,y\}$ for $1\leq i\leq k$. Let $V=\Span_K A$. For $v\in V$, $v=\beta_1z_1e_1+\ldots+\beta_k z_ke_k$, we denote the set $\Supp(v)=\{j: \beta_j\neq 0\}$. We aim to show how to build a linearly independent set of $n=k+1$ elements.

\textit{Case 1.} If one of elements $xe_{k+1}$ and $ye_{k+1}$ is not in $V$, we add that element to $A$ and we get a linearly independent set satisfying the conditions of the statement.

\textit{Case 2.} Assume that both $xe_{k+1}, ye_{k+1}$ are in $V$. Then one of them must be non-zero, otherwise we get $S_1 e_{k+1}=0$ and then all multiplication maps by linear forms can not be injective. Therefore, $\Supp(xe_{k+1})\cup \Supp(ye_{k+1})\neq \emptyset$. Assume that every set of the form $\{z'_1e_1,\ldots,z'_{k+1}e_{k+1}\}$, where $z'_i\in \{x,y\}$ for $i=1,\ldots,k+1$, is linearly dependent.

If $\Supp(xe_{k+1})\cup \Supp(ye_{k+1})= \{1,\ldots,k\}$, then for $1\leq i\leq k$, the set
$$
B=(A\setminus \{z_ie_i\})\cup \{t_ie_i\} \cup \{z_{k+1}e_{k+1}\}
$$
is linearly dependent, where $t_i\in \{x,y\} \setminus\{z_i\}$, $z_{k+1}=x$ if $i\in \Supp(xe_{k+1})$ and $z_{k+1}=y$ else. Moreover, the set $B'=(A\setminus \{z_ie_i\}) \cup \{z_{k+1}e_{k+1}\}$ is still linearly independent because $|B'|=k$ and $\Span_K B'= V$. Therefore, $t_ie_i\in V$ for all $i=1,\ldots,k$. This implies that $M_1=V$ and $\dim_K M_1=\dim_K V=n-1<n$. Hence $M$ does not have the WLP, a contradiction.

If $\Supp(xe_{k+1})\cup \Supp(ye_{k+1})\neq \{1,\ldots,k\}$, without loss of generality, we can assume that $\Supp(xe_{k+1})\cup \Supp(ye_{k+1})= \{1,\ldots,s\}$ where $s< k$. We aim to prove $t_ie_i\in V$ for all $i=1,\ldots,k$. By the same proof as above, we get $t_ie_i\in V$ for $i=1,\ldots,s$.

Let $C= \bigcup_{i=1}^s \Supp(t_ie_i)$. For $j>s$, if $j\in C$, say $j\in \Supp(t_1e_1)$, then the set
$$
D=(A\setminus\{z_1e_1,z_je_j\})\cup \{t_1e_1,t_je_j,z_{k+1}e_{k+1}\}
$$
is linearly dependent and the set
$$
D'=(A\setminus\{z_1e_1,z_je_j\})\cup \{t_1e_1,z_{k+1}e_{k+1}\}
$$
is linearly independent.
Moreover, $\Span_K D'=\Span_K A=V$. Therefore, $t_je_j\in V$. We repeat the above process for the set $C'=C\cup \bigcup_{j\in C}\Supp(t_je_j)$. Note that for $p\in C'\setminus C$, say $p\in \Supp(t_je_j)$ where $j>s$ and $j\in \Supp(t_1e_1)$, then the set
$$
E=(A\setminus\{z_1e_1,z_je_j,z_pe_p\})\cup \{t_1e_1,t_je_j,t_pe_p,z_{k+1}e_{k+1}\}
$$
is linearly dependent, while the set
$$
E'=(A\setminus\{z_1e_1,z_je_j,z_pe_p\})\cup \{t_1e_1,t_je_j,z_{k+1}e_{k+1}\}
$$
is linearly independent. Moreover, $\Span_K E'=\Span_K A=V$. Hence $t_pe_p\in V$. This process will stop after a finite number of steps. Let $C_0$ be the final union set of indices. Then $C_0= \{1,\ldots,k\}$. Otherwise, we get
$$
t_je_j\in \Span_K\{z_ie_i: i\in C_0\} \text{ for } j\in C_0
$$
and the submodule $N=\sum_{i\in C_0}Se_i+Se_{k+1}$ has dimension
$$
\dim_K N_1=\#C_0< \dim_K N_0=\#C_0+1.
$$
Therefore, $N$ does not have the WLP, so does $M$. Hence we have $t_ie_i\in V$ for all $i=1,\ldots,k$. This implies that $M_1=V$ and $\dim_K M_1=\dim_K V=n-1<n$. Hence $M$ does not have the WLP, a contradiction. This concludes the proof.
\end{proof}
\end{lem}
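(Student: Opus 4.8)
The plan is to recast the desired conclusion as the existence of an \emph{independent transversal} of a family of two-element sets, and then to verify the associated rank (Hall-type) condition using only the injectivity that the WLP provides. Concretely, for each $i$ set $A_i=\{xe_i,ye_i\}\subseteq M_1$. A set of the form $\{z_1e_1,\ldots,z_ne_n\}$ with $z_i\in\{x,y\}$ and the $z_ie_i$ linearly independent is exactly an independent transversal of the family $(A_i)_{i=1}^{n}$ in the linear matroid on the $K$-vector space $M_1$. By Rado's theorem (the matroid generalization of Hall's marriage theorem), such a transversal exists if and only if, for every subset $J\subseteq\{1,\ldots,n\}$,
$$\dim_K\Span_K\Big(\bigcup_{i\in J}A_i\Big)\ \ge\ |J|.$$
Thus the entire statement reduces to checking this one inequality for all $J$.

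To verify it, fix $J$ and let $N_J=\sum_{i\in J}Se_i$ be the submodule generated by the corresponding generators. Since $e_1,\ldots,e_n$ is a minimal system of generators concentrated in degree $0$, the $e_i$ form a basis of $M_0$, so $\dim_K(N_J)_0=|J|$. Because $M$ has the WLP and $h_0\le h_1$, multiplication by a Lefschetz element $\ell$ is injective on $M_0$; restricting this injective map to the subspace $(N_J)_0$ and observing that its image lies in $(N_J)_1$ yields $\dim_K(N_J)_1\ge|J|$. Finally $\bigcup_{i\in J}A_i$ spans precisely $(N_J)_1$, so $\dim_K\Span_K(\bigcup_{i\in J}A_i)=\dim_K(N_J)_1\ge|J|$, which is the required condition. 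Note that the case $|J|=1$ already encodes the basic observation that $\ell e_i\neq 0$, so no $A_i$ reduces to the zero vector.

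The only genuine content beyond this bookkeeping is the combinatorial extraction of the transversal itself, and I expect that to be the main obstacle. If one wishes to avoid quoting Rado's theorem as a black box, the same rank inequalities can be converted into an explicit construction by an exchange/augmenting argument: build an independent family greedily, and whenever a new generator $e_{k+1}$ fails to be adjoinable, apply the rank condition to a carefully chosen index set $J$ to perform swaps $z_ie_i\leftrightarrow t_ie_i$ (with $t_i\in\{x,y\}\setminus\{z_i\}$) that preserve independence while enlarging the span. The delicate point is showing that this swap process terminates with a full transversal rather than collapsing $\Span_K$ onto a subspace of dimension $n-1$ — which would force $\dim_K M_1 < n$ and contradict the WLP. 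I anticipate that organizing this augmenting argument as an induction on $n$ is the cleanest route, with the base case $n=1$ immediate from $\ell e_1\neq 0$.
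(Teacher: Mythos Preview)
Your proposal is correct and takes a genuinely different route from the paper. The paper proves the lemma by a direct induction on $n$: it applies the inductive hypothesis to the submodule $N=(e_1,\ldots,e_k)$ to obtain an independent set $A=\{z_1e_1,\ldots,z_ke_k\}$, and then, assuming neither $xe_{k+1}$ nor $ye_{k+1}$ can be adjoined to $A$, performs a sequence of explicit swaps $z_ie_i\mapsto t_ie_i$ (with $t_i$ the other variable), tracking supports through index sets $C\subset C'\subset\cdots$, to force $M_1=\Span_K A$ and hence $\dim_K M_1=k<n$, contradicting injectivity of the Lefschetz map. In other words, the paper carries out by hand precisely the augmenting/exchange argument you sketch in your final paragraph as the alternative to citing Rado.

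Your primary argument is cleaner: once one recognizes the problem as asking for an independent transversal of the two-element sets $A_i=\{xe_i,ye_i\}$ in the linear matroid on $M_1$, Rado's theorem reduces everything to the single rank inequality $\dim_K(N_J)_1\geq |J|$, which is immediate from injectivity of $\times\ell$ on the submodule $N_J$. This isolates the genuine content --- that the WLP forces every $N_J$ to have non-decreasing Hilbert function --- from the combinatorics of extracting the transversal. The paper's approach, by contrast, is entirely self-contained and does not rely on an external combinatorial result, at the cost of a noticeably longer and more delicate case analysis (the tracking of the support sets $C,C',\ldots,C_0$). Both are valid; yours is the more conceptual one.
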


In the following, we aim to give a procedure to verify if $M$ has the WLP. By Lemma \ref{l1}, we can assume that
$
\{xe_1, \ldots, xe_r, ye_{r+1}, \ldots, ye_n\}
$
is a basis of $M_1$. The multiplication maps by the variables:
$$
\times x : M_0 \rightarrow M_1,\;\;\;\;\; \times y: M_0 \rightarrow M_1
$$
are morphisms between vector spaces of the same dimension. Let $A$, $B$ be their matrices, respectively. Then we have
\begin{displaymath}
A = \left(\begin{array}{c|c}
I_r  &  A'  \\
\hline
0  &  A''
\end{array}\right),\;\;\;
B = \left(\begin{array}{c|c}
B'  &  0  \\
\hline
B''  &  I_{n-r}
\end{array}\right)
\end{displaymath}
where $I_{r}$ and $I_{n-r}$ are the identity matrices of the sizes $r$ and $n-r$, respectively, and $0$ is the null matrix having the appropriate size.

It is clear that $M$ has the WLP if and only if there exist $\alpha, \beta \in K $ such that
$$
|\alpha A + \beta B|\neq 0.
$$

Note that if $|A| \neq 0$ we can choose $\alpha=1 , \beta=0  $, similarly if $|B| \neq 0$,  in these cases $M$ has the WLP.
Thereafter we can assume $|A| = |B| = 0 $, so we only need to check the existence of $\alpha\neq 0$ and $\beta \neq 0$ such that  $|\alpha A + \beta B|\neq 0.$ We have:
\begin{eqnarray*}
|\alpha A + \beta B| &=& \left|\left(\begin{array}{c|c}

\alpha I_r +\beta B' &  \alpha A'  \\

\hline

\beta B''  &  \alpha A'' +\beta I_{n-r}
\end{array}\right)\right| \\
&=&  \left|\left(\begin{array}{c|c}

\frac{\alpha}{\beta} I_r + B' &   A'  \\

\hline

 B''  &   A'' +\frac{\beta}{\alpha} I_{n-r}
\end{array}\right)\right| \alpha ^{n-r} \beta^r.
\end{eqnarray*}

Let $\gamma =\frac{\alpha}{\beta}$,  the determinant $|\alpha A + \beta B|$ is a polynomial of the form $\frac{1}{\gamma^d}p(\gamma)$ in $K[\gamma,\frac{1}{\gamma}]$, where $p(\gamma)\in K[\gamma]$. If  $p(\gamma)$ is the zero polynomial then $M$ does not have the WLP, otherwise there always exists $\tau \in K$ such that $p(\tau)\neq 0$. In this case $M$ has the WLP with a Lefschetz element $\ell = \tau x + y$.

\begin{exa}
Let $M=(( x^6, x^2y^4, x^3y^3) +I)/I\subseteq S/I$ be a graded $S$-module, where $I=(x,y)^8+(x^2y^5, x^4y^3)$ and the degrees are shifted to $6$.

Observe that $xm_1$ and $ym_1$ are linearly independent and not in the space
$$
\Span_K\{xm_2, xm_3, ym_2, ym_3 \}.
$$
By changing the basis of $M$, we have
\begin{eqnarray*}
M_0 &=&\Span_K\{(x^6+x^2y^4)+I, (x^6-x^2y^4)+I, (x^3y^3)+I\},\\
M_1 &=& \Span_K\{(x^7+x^3y^4)+I, x^6y+I, x^3y^4+I\}.
\end{eqnarray*}
Set $e_1= x^6+I, e_2=x^2y^4+I, e_3=x^3y^3+I$. We get that $\{xe_1, ye_2, ye_3\}$ is a basis of $M_1$ which is of the form as in Lemma \ref{l1} and $ye_1= xe_2$, $xe_2= xe_1-2ye_3 $ and $ xe_3= 0$. The matrices given by the maps $\times x$ and $\times y$ are:
$$
A = \left(\begin{array}{ccc}
1&1 & 0  \\
0  &0 & 0\\
0&-2&0
\end{array}\right), \;\;\;\; B = \left(\begin{array}{ccc}
0  &0 & 0\\
1&1 & 0  \\
0&0&1
\end{array}\right).
$$
By computing $\alpha A + \beta B$, and setting $\tau = \frac{\beta}{\alpha}$ we obtain the matrix:
$$
\left(\begin{array}{ccc}
\tau & 1 & 0  \\
1  & \frac{1}{\tau} & 0\\
0&-2&   \frac{1}{\tau}
\end{array}\right)
$$
which has determinant equal to zero for all $\tau$, so $M$ does not have the WLP.

From the above note, we can construct a procedure to ensure the WLP of a given graded module $M$ with the Hilbert function $(n,n)$ over $K[x,y]$ as in the following:

\emph{Step 1:} Take an arbitrary minimal system of generators $e_1,\ldots,e_m$ of $M$ in degree $0$. By Lemma \ref{l1}, we check the linearly independent property of all sets of the form $\{z_1e_1, \ldots, z_ne_m\},$ where $z_i\in \{x,y\}$ for $1\leq i\leq m$.
If all sets are linearly dependent, then we conclude that $M$ does not have the WLP. Else, we turn to Step 2

\emph{Step 2:} After changing the indices, we can choose a basis of $M_1$ of the form
$$
xe_1, \ldots, xe_r, ye_{r+1}, \ldots, ye_m.
$$

Compute the matrices $A$, $B$ of the multiplications by $x$, $y$.

\emph{Step 3:} Construct a matrix $C$ from $A$ and $B$ by taking the columns $1^{st},\ldots,r^{th}$ of $B$ to be the $r$ first columns of $C$ and taking the columns $(r+1)^{th},\ldots m^{th}$ of $A$ to be the remain columns of $C$. Let $D$ be the matrix inducing from $C$ by replacing the diagonal by $\tau$ for the first $r$ elements in the upper of diagonal and $1/\tau$ for the $(m-r)$ remain elements in the lower of the diagonal. Let $P(\tau)\in K[\tau]$ be the determinant of $D$.

If $P(t)$ is zero polynomial, then $M$ does not have the WLP. Otherwise $M$ has the WLP.
\end{exa}
\section{Algorithm to check the WLP}
\label{sec algo}
\paragraph{}
\hspace{4pt}  Let $S=K[x,y]$ be the standard graded polynomial ring over a field $K$ of characteristic zero. In this section we develop an algorithm to check the WLP for Artinian $S$-modules.

Let $ M=M_0\oplus M_1 \oplus \cdots \oplus M_s $ be an Artinian graded $S$-module with Hilbert function $HF_M=(h_0, \ldots, h_s)$. The module $M$ has the WLP if for each degree $i$, there exists a linear form $\ell_i$ such that the map $\times \ell_i  : M_i \rightarrow M_{i+1}$ has maximal rank.  More precisely, the set of all Lefschetz elements is a non-empty Zariski open set for each degree and their intersection is non-empty, so we can choose an element in this intersection to be a Lefschetz element for $M$.

Fixed a degree $i$, if $h_i\leq h_{i+1}$ then the WLP of $M$ implies that there exists a linear form $\ell_i$ such that the map $\times\ell_i : M_i \rightarrow M_{i+1} $ is injective. Otherwise if $h_i > h_{i+1}$, there exists a linear form $\ell_i$ such that the multiplication by $\ell_i$ in degree $i$ is surjective, but this is equivalent to the fact that the multiplication map by $\ell_i$ of the dual space is injective. Thus if $h_i > h_{i+1}$ we can dualize and search for an injective multiplication for the dual space. Therefore, it is enough to study modules of the type $M=M_0\oplus M_1$ with Hilbert function $HF_M=(h_0, h_1)$ and $h_0\leq h_1$.

At first, we consider the multiplication by $x$ and $y$:
$$
\times x : M_0 \rightarrow M_1,\;\;\;\;\; \times y: M_0 \rightarrow M_1.
$$
\begin{step}
If $\Ker(\times x)= (0) $, then it is clear that $M$ has the WLP and $x$ is a Lefschetz element.
\end{step}
\begin{step}Similarly, we consider $\times y$ and if $\Ker(\times y)= (0) $ then  $M$ has the WLP.
\end{step}
So, we can assume that $\dim_K \Ker(\times x) = r > 0 \;\;\text{and}\;\; \dim_K \Ker(\times y) = s > 0.$
\begin{step} 
If $\Ker(\times x) \cap \Ker(\times y) \neq (0)$ then  $M$ does not have the WLP.
\end{step}
\begin{proof}
In fact, if  $m \in \Ker(\times x) \cap \Ker(\times y)$ we have $\ell m = 0$ for all $\ell \in S_1$. Then the multiplication by any linear form $\ell\in S_1$ can not be injective
\end{proof}

Now we can assume that $\Ker(\times x) \cap \Ker(\times y) = (0)$. By considering the subspace $y \Ker(\times x)+ x \Ker(\times y)\subseteq M_1$, we continue with the following steps.
\begin{step} 
\label{step4}
If  $y \Ker(\times x)\cap x \Ker(\times y) \neq (0)$, then 
$$
\dim_K (y \Ker(\times x)+ x \Ker(\times y)) < r+s. 
$$ 
In particular, $M$ does not have the WLP .
\end{step}
\begin{proof}
It is clear that the dimension of image of a vector space is always less than the dimension of the vector space, so we have
$$
\dim_K (y\Ker(\times x))\leq \dim_K \Ker(\times x)=r
$$ 
and  
$$
\dim_K (x\Ker(\times y))\leq \dim_K \Ker(\times y)=s.
$$
Now since $y \Ker(\times x)\cap x \Ker(\times y) \neq (0)$, we get
$$
\dim_K (y \Ker(\times x)+ x \Ker(\times y))<\dim_K (y \Ker(\times x))+ \dim_K (x\Ker(\times y))\leq r+s.
$$
Next let $N= \Ker(\times x)+\Ker(\times y)\subset M_0$. For a linear form $\ell\in S_1$, we have $\ell N\subseteq y \Ker(\times x)+ x \Ker(\times y)$. Moreover,
$$
\dim_K N=r+s\text{ and }\dim_K (y \Ker(\times x)+ x \Ker(\times y))< r+s.
$$
Hence the multiplication map by $\ell$ is not injective. This concludes the proof.
\end{proof}
Next we claim that we only need to consider a Lefschetz element of the form $\alpha x + \beta y $ with $\alpha$ and $ \beta$ are different from zero. The existence of such Lefschetz element can be seen simply by the following:
\begin{lem}
Let $M$ be a graded $S$-module with  $HF_M =( h_0, h_1 )$, $ h_0\leq h_1 $, and $M$ has the WLP. Then there exist $\alpha, \beta \neq 0$ such that $\ell = \alpha x + \beta y $ is a Lefschetz element of $M$.
\end{lem}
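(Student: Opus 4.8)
The plan is to reduce the claim to the statement that a single nonzero polynomial in the coefficients $\alpha,\beta$ does not vanish on the ``coordinate axes'' $\alpha=0$ and $\beta=0$. First I would fix bases of $M_0$ and $M_1$ and let $A,B$ denote the matrices (of size $h_1\times h_0$) of the multiplication maps $\times x$ and $\times y$, as in the preceding discussion. Then for $\ell=\alpha x+\beta y$ the map $\times\ell\colon M_0\to M_1$ is represented by $\alpha A+\beta B$. Since $h_0\le h_1$, the earlier reduction shows that $M$ having the WLP means precisely that some Lefschetz element $\ell$ makes $\times\ell$ \emph{injective}, i.e. makes $\alpha A+\beta B$ have maximal rank $h_0$. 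Maximal rank is equivalent to the non-vanishing of at least one of the $h_0\times h_0$ minors of $\alpha A+\beta B$.

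The key step is then purely about polynomials. Choose a Lefschetz point $(\alpha_0,\beta_0)$, not both zero, witnessing the WLP; by the minor characterization there is a fixed $h_0\times h_0$ submatrix whose determinant $q(\alpha,\beta)$ satisfies $q(\alpha_0,\beta_0)\neq0$. Because the entries of $\alpha A+\beta B$ are linear in $\alpha,\beta$, this $q$ is a homogeneous polynomial of degree $h_0$ in $K[\alpha,\beta]$, and it is not the zero polynomial. Now consider the product $\alpha\beta\,q(\alpha,\beta)$. As $K[\alpha,\beta]$ is an integral domain, this is again a nonzero polynomial. Since $K$ is infinite, a nonzero polynomial cannot vanish identically on $K^2$, so there exists $(\alpha_1,\beta_1)\in K^2$ with $\alpha_1\beta_1\,q(\alpha_1,\beta_1)\neq0$. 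For this point we necessarily have $\alpha_1\neq0$, $\beta_1\neq0$, and $q(\alpha_1,\beta_1)\neq0$; hence $\alpha_1 A+\beta_1 B$ still has maximal rank $h_0$, so $\times(\alpha_1 x+\beta_1 y)$ is injective. Thus $\ell=\alpha_1 x+\beta_1 y$ is a Lefschetz element with both coefficients nonzero, as required.

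I do not expect a serious obstacle here: the only point requiring care is that $M_0$ and $M_1$ need not have the same dimension, so I must phrase maximal rank through a maximal minor $q$ rather than through a single square determinant $|\alpha A+\beta B|$ as was done in the square case of Section~\ref{sec dete}. The essential ingredients are that the rank condition is an open (non-vanishing) condition defined by the finitely many homogeneous minors, and that $K$ is infinite. Equivalently, one could phrase the argument geometrically: the Lefschetz elements form a non-empty Zariski-open subset of $\proj S_1\cong\pp^1$, whose complement is therefore a finite set of points, so it cannot contain all points other than $[1:0]$ and $[0:1]$; this gives a Lefschetz element with $\alpha,\beta\neq0$. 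I would present the polynomial version above, as it is the most elementary and self-contained.
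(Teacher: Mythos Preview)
Your argument is correct, and it is genuinely different from the paper's. The paper argues by contradiction: assuming every $\alpha x+\beta y$ with $\alpha,\beta\neq 0$ has nontrivial kernel and that (say) $y$ is Lefschetz, it picks for each $r\in\mathbb{N}$ a nonzero $m_r\in\Ker(\times(x+ry))$ and shows by induction that the $m_r$ are linearly independent in $M_0$, contradicting finite dimensionality. Your route is instead the Zariski-open/polynomial argument: fix one maximal minor $q(\alpha,\beta)$ that does not vanish at a Lefschetz point, observe it is a nonzero homogeneous polynomial, and use that $\alpha\beta\,q$ cannot vanish identically over an infinite field. Your approach is shorter, more conceptual, and in fact more general---it only needs $K$ infinite, whereas the paper's induction uses the embedding $\mathbb{N}\subset K$ and hence characteristic zero. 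The paper's argument, on the other hand, is entirely elementary linear algebra and avoids any appeal to minors or the vanishing locus of polynomials. Your care in passing to a single $h_0\times h_0$ minor rather than a full determinant (since $h_0\le h_1$ need not be an equality) is exactly the right adjustment.
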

\begin{proof} Suppose by the contrary that for every $\alpha, \beta \neq 0 $ the multiplication by $\alpha x + \beta y $ is not injective. Without loss of generality, we assume that $ \ell = y$ is a Lefschetz element.
Let $L_r = x+ r y$, $r\in \mathbb{N}\subset K$. For each $r \in \mathbb{N} $ take $0 \neq m_r \in Ker (\times L_r)$ and fix $t \in \mathbb{N}$, we aim to prove that $\{ m_r \}_{r \leq t }$ is an independent set by induction on $t$.

If $t=2$, let $\lambda_1, \lambda_2\in K$ such that $\lambda_1 m_1 + \lambda_2 m_2 = 0$, then we get
\begin{eqnarray*}
L_1(\lambda_1 m_1 + \lambda_2 m_2)&=&\lambda_2(x+y)m_2= L_2 \lambda_2m_2- \lambda_1 y m_2= - \lambda_1 y m_2=0;\\
L_2(\lambda_1 m_1 + \lambda_2 m_2)&=&\lambda_1(x+2y)m_1= L_1 \lambda_1m_1+ \lambda_2 y m_1=\lambda_2 y m_1=0.
\end{eqnarray*}
Since the multiplication map by $y$ is injective, we have $- \lambda_2  m_2 = \lambda_1 m_1=0$ and then $\lambda_2  = \lambda_1 =0$.

Let $\{\lambda_1,\ldots, \lambda_{t+1} \}$ such that $ \sum_{r=1}^{t+1} \lambda_r m_r =0 $. We have
\begin{eqnarray*}
L_{t+1} \sum_{r=1}^{t+1} \lambda_r m_r&=& (x+(t+1)y) \sum_{r=1}^{t} \lambda_r m_r \\
&=&\sum_{r=1}^{t}L_r\lambda_r m_r + \sum_{r=1}^{t} \lambda_r(t+1-r)y m_r\\
&=& y \sum_{r=1}^{t} \lambda_r(t+1-r) m_r =0.
\end{eqnarray*}
Since the map $(\times y)$ is injective, we get $\sum_{r=0}^{t+1} \lambda_r(t-r) m_r =0 $. By the induction hypothesis, we have $\lambda_r = 0$ for all $r = 1, \ldots, t$ and then $\lambda_{t+1}=0$. This implies that the dimension of $M_0$ is infinite and so we can conclude the proof.
\end{proof}
Now we assume that $\dim_K (y \Ker(\times x)+ x \Ker(\times y)) = r+s $ and denote $\overline{M}=M/N$, where $N$ is the graded submodule generated by $\Ker(\times x)+ \Ker(\times y)$. Then
$$
\overline{M} = \overline{M_0}\oplus\overline{M_1} = M_0 / (\Ker(\times x)+  \Ker(\times y)) \oplus M_1/(y \Ker(\times x)+ x \Ker(\times y))
$$
and $HF_{ \overline{M} }= (h_0-r-s,  h_1-r-s)$, which is still not decreasing.
\begin{step}
\label{step5}
$M$ has the WLP if and only if $\overline{M}$ has the WLP.
\end{step}
\begin{proof}
($\Rightarrow$): Let $\ell = \alpha x + \beta y$, $\alpha, \beta \neq 0$, be a Lefschetz element of $M$ and $\overline{m}\in \overline{M_0}$ such that $\ell \overline{m} = \overline{0}$. There exist $m_0$ in $\Ker(\times x)$ and  $m_1$ in $\Ker(\times y)$ such that
$$
\ell m = y m_0 + x m_1 = \ell \frac{1}{\beta} m_0 + \ell \frac{1}{\alpha} m_1 = \ell( \frac{1}{\beta} m_0 +  \frac{1}{\alpha} m_1).
$$
Since the multiplication by $\ell$ is injective, $m=  \frac{1}{\beta} m_0 +  \frac{1}{\alpha} m_1$. Hence $\overline{m}=\overline{0}$.

($\Leftarrow$): Let $\ell = \alpha x + \beta y $ with $\alpha, \beta \neq 0$ be a Lefschetz element of $\overline{M}$ and $m \in M_0$ such that $\ell m = 0$. Then $\ell \overline{m} = \overline{0}$. Since the map $\times \ell$ is injective, we get $ \overline{m} = \overline{0}$, i.e, $m \in \Ker(\times x)+  \Ker(\times y)$, say $ m =  m_0 +  m_1$, where $m_0\in\Ker(\times x)$,  $m_1\in \Ker(\times y)$. We have $\ell m = \beta  y m_0 +\alpha x m_1 = 0 $. Therefore, $\beta  y m_0 = -\alpha x m_1$. Moreover, $y \Ker(\times x)\cap x \Ker(\times y) = (0)$, so $m\in \Ker(\times x) \cap \Ker(\times y) = (0)$.
\end{proof}
Combining all steps together, we have the following algorithm to check the WLP:
\begin{center}
\begin{tabular}{|ccc|}
\hline
&&\\
$START$  &  & \\
$\downarrow\phantom{no} $  &  & \\
$\times x$ is injective &$\stackrel{yes}{\longrightarrow}$  &  $M$ has the WLP   \\
$\downarrow no$  &  & \\
$\times y$ is injective &$\stackrel{yes}{\longrightarrow}$  &  $M$ has the WLP   \\
$\downarrow no$  &  & \\
$\Ker(\times x)\cap \Ker(\times y) \neq (0)$  &$\stackrel{yes}{\longrightarrow}$  &  $M$ does not have the WLP   \\
$\downarrow no$  &  & \\

$ y\Ker(\times x)\cap x\Ker(\times y) \neq (0)$
	 &$\stackrel{yes}{\longrightarrow}$  &   $M$ does not have the WLP   \\
$\downarrow no$  &  & \\
 pass to $\overline{M}$ and go to start  &  & \\
&&\\
\hline
\end{tabular}
\end{center}

Note that if the first four steps give us negative answers then we can replace $M$ by $\overline{M}$ and back to the start. Moreover, this algorithm ends after a finite number of steps because after each cycle the Hilbert function of the module decreases by at least two in each degree.
\begin{exa}
Let $M= (( x^8, x^6y^2, x^4y^4, x^2y^6, y^8) + I)/I\subseteq S/I$, where $I$ is the ideal defined by $I=(x,y)^{10} +(x^8y, x^7y^2, x^9- x^2y^7, x^6y^3-x^5y^4 )$. After shifting the degree of $M$, it has the Hilbert function $HF_M=(5,6)$. We denote the $i$-th generator of $M$ by $m_i$. Then we have:
\begin{enumerate}
	\item  $\Ker(\times x) = \langle m_2\rangle \neq 0$,
	\item  $\Ker(\times y ) = \langle m_1\rangle \neq 0 $,
	\item  $\Ker(\times y )\cap \Ker(\times y )= 0$,
	\item  $y\Ker(\times x)\cap x\Ker(\times y) = 0 $.
\end{enumerate}
Passing to $\overline{M}$, we have $\overline{M} = ((x^4y^4, x^2y^6, y^8) + I')/I'$, where $I'=I+(x^9, x^6y^3)$, and $HF_{\overline{M}}=(3,4)$. Repeat the process again
\begin{enumerate}
	\item  $\Ker(\times x) = \langle m_3\rangle \neq 0$,
	\item  $\Ker(\times y ) = \langle m_4\rangle \neq 0 $,
	\item  $\Ker(\times y )\cap Ker(\times y )= 0$,
	\item  $ y\Ker(\times x)\cap x\Ker(\times y) = 0 $.
\end{enumerate}
Finally, passing to $\overline{\overline{M}}$ we have
$\overline{\overline{M}}= (( y^8) +I'')/I''$, where $I''=I'+(x^4y^5, x^3y^6)$, $HF_{\overline{\overline{M}}}=(1,2)$ and $(\times x)$ is injective. Therefore, $M$ has the WLP.
\end{exa}
\section{Indecomposable modules and the WLP}
\label{sec inde}
\paragraph{}

In this section, we study the WLP of indecomposable modules over the standard graded polynomial ring $S=K[x,y]$, where $K$ is a field of characteristic zero. Note that indecomposable modules play an important role in the study of WLP in general situation, as described in the following observation.

\begin{rem}
Let $M$ be a graded $S$-module. Suppose that $M$ can be decomposed as a direct sum of indecomposable submodules $M= N_1\oplus N_2\oplus\ldots\oplus N_t$. Then $M$ has the WLP if and only if all the direct summands $N_i$ have the WLP and their Hilbert functions have the same behavior. More precisely, for each degree, if the Hilbert function of one summand is strictly increasing (strictly decreasing) then the Hilbert functions of the other summands are also strictly increasing (strictly decreasing).
\end{rem}

Next by using the algorithm in Section \ref{sec algo}, we have the following result:
\begin{thm}\label{thmind}
Let $M$ be an Artinian graded $S$-module such that every its submodule has a non-decreasing Hilbert function, then $M$ has the WLP.
\end{thm}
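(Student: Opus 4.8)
The plan is to reduce the WLP of $M$ to the two--degree situation analysed by the algorithm of Section \ref{sec algo}, one consecutive pair of degrees at a time, and to show that the hypothesis on submodules forces every run of the algorithm to terminate in the positive branch. Since $M$ is itself one of its own submodules, its Hilbert function $(h_0,\dots,h_s)$ is non-decreasing, so for each $i<s$ we must produce a linear form $\ell$ with $\times\ell\colon M_i\to M_{i+1}$ injective (the top map $M_s\to M_{s+1}=0$ is automatically of maximal rank). Fix $i$ and set $T_i=M_{\geq i}/M_{\geq i+2}$, a graded module concentrated in degrees $i,i+1$ whose multiplication $\times\ell\colon (T_i)_i\to(T_i)_{i+1}$ coincides with the genuine map $M_i\to M_{i+1}$. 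Every submodule of $T_i$ has the form $P/M_{\geq i+2}$ with $M_{\geq i+2}\subseteq P\subseteq M_{\geq i}$, and since $(M_{\geq i+2})_i=(M_{\geq i+2})_{i+1}=0$ its Hilbert function is exactly $(\dim_K P_i,\dim_K P_{i+1})$; as $P$ is a submodule of $M$, this is non-decreasing. Thus $T_i$ inherits the property that every submodule has a non-decreasing Hilbert function, and it suffices to prove that any such two--degree module has the WLP.

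I would isolate this as the core claim: if $T=T_0\oplus T_1$ is concentrated in two degrees and every submodule of $T$ satisfies $\dim_K(\text{degree }0)\le\dim_K(\text{degree }1)$, then $T$ has the WLP. The proof goes by induction on $\dim_K T$, following the branches of the algorithm. If $\times x$ or $\times y$ is injective we are in Steps 1--2 and done. Otherwise $r=\dim_K\Ker(\times x)>0$ and $s=\dim_K\Ker(\times y)>0$, and neither failure branch can occur: a nonzero $m\in\Ker(\times x)\cap\Ker(\times y)$ would generate the submodule $Km$, concentrated in degree $0$, violating the hypothesis and ruling out Step 3; and for Step \ref{step4}, setting $N=\Ker(\times x)+\Ker(\times y)$ (which has dimension $r+s$ once the Step 3 intersection is zero) produces a submodule $SN$ with $(SN)_0=N$ and $(SN)_1=y\Ker(\times x)+x\Ker(\times y)$, so if this last space had dimension $<r+s$ the submodule $SN$ would have a decreasing Hilbert function, again a contradiction. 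Hence $\dim_K\big(y\Ker(\times x)+x\Ker(\times y)\big)=r+s$ and the algorithm passes to $\overline{T}=T/SN$.

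The decisive point is that $\overline T$ again satisfies the hypothesis. Every submodule of $\overline T$ is $P/SN$ for $SN\subseteq P\subseteq T$, and
\[
\dim_K(P/SN)_1-\dim_K(P/SN)_0=(\dim_K P_1-\dim_K P_0)-\big(\dim_K (SN)_1-\dim_K(SN)_0\big).
\]
Because $\dim_K (SN)_0=\dim_K(SN)_1=r+s$ the second bracket vanishes, so the increment $\dim_K P_1-\dim_K P_0\ge 0$ is carried over unchanged and $\overline T$ has only non-decreasing submodules. Since $r+s>0$ we have $\dim_K\overline T<\dim_K T$, so by the inductive hypothesis $\overline T$ has the WLP, and then Step \ref{step5} transfers the WLP back to $T$. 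This establishes the core claim, hence that each map $M_i\to M_{i+1}$ is injective for $\ell$ ranging over a non-empty Zariski--open set $U_i\subseteq S_1$. Finally, as recalled at the start of Section \ref{sec algo}, each $U_i$ is open and dense in the irreducible space of linear forms, so $\bigcap_{i<s}U_i\neq\emptyset$; any $\ell$ in this intersection is a Lefschetz element, and $M$ has the WLP.

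I expect the main obstacle to be precisely this inheritance step: checking that the reduced module $\overline T$ still has only non-decreasing submodules. This hinges on the exact equality $\dim_K\big(y\Ker(\times x)+x\Ker(\times y)\big)=r+s$ in the non-failure branch, which guarantees that quotienting by $SN$ subtracts the same amount $r+s$ from both graded pieces and therefore preserves every Hilbert-function increment. Once this bookkeeping is secured, eliminating the two failure branches and assembling a single common Lefschetz element from the open sets $U_i$ are routine.
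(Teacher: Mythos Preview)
Your proof is correct and follows essentially the same route as the paper: reduce to a two-degree module, show that the algorithm's failure branches (Steps~3 and~\ref{step4}) are excluded by the non-decreasing-submodule hypothesis, verify that the quotient $\overline{T}$ inherits the hypothesis, and conclude by induction via Step~\ref{step5}. Your treatment of the inheritance step---observing that $\dim_K(SN)_0=\dim_K(SN)_1=r+s$, so passing to the quotient subtracts the same quantity in each degree and preserves every Hilbert-function increment---is in fact cleaner than the paper's own formulation.
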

\begin{proof}
Let $HF_M=(h_0, \ldots, h_d)$. To ensure the WLP of $M$ we must ensure that there is an injective map $\times\ell : M_i\rightarrow M_{i+1}$ for $i=0, \ldots, d$, where $\ell \in S_1$.

Fix $i$, let $M'=\bigoplus_{j>i+1} M_j$ and $M_{\geq i}=\bigoplus_{j\geq i} M_j$. Then $M_{\geq i}=M_i\oplus M_{i+1}\oplus M'$ and $M'$ is a submodule of $M_{\geq i}$. Let
$N = (M_{\geq i}/M')(-i) \subseteq (M/M')(-i).$ 
The Hilbert function of $N$ is $HF_N=( h_i, h_{i+1} )$. Observe that the map $\times\ell: M_i\longrightarrow M_{i+1}$ is injective if and only if the map $\ell|_{N_0}: N_0 \longrightarrow N_1$ is too. So we only need to prove that $N$ has the WLP.

By the hypothesis on submodules of $M$, we get that every submodule of $N$ has a non-decreasing Hilbert function. Hence to check the WLP for $N$ we can use directly the algorithm in Section \ref{sec algo}.

Suppose that the first two steps in the algorithm give us negative answers. Then by Step \ref{step4} and the fact that every submodule of $N$ has a non-decreasing Hilbert function, we have 
$$
\Ker(\times x) \cap \Ker(\times y) = (0)\text{ and } y\Ker(\times x) \cap x\Ker(\times y) = (0).
$$
So by Step \ref{step5} in the algorithm, we aim to prove the WLP for $\overline{N}=N/T$ where $T=\langle  \Ker(\times x) + \Ker(\times y)\rangle$ is a submodule of $N$. 

Note that by using the algorithm repeatedly, we only need to confirm that every submodule of $\overline{N}$ has a non-decreasing Hilbert function. Let $\overline{P}$ be a submodule of $\overline{N}$. Then $P + T$  is a submodule of $N$ and it has a non-decreasing Hilbert function, we prove that $\overline{P}$ also has a non-decreasing Hilbert function. In fact, let $Q= P \cap T$, this is a submodule of $N$, so it has a non-decreasing Hilbert function. The proof follows from:
\begin{eqnarray*}
 \dim_K  \overline{P_0}\,&=& \dim_K P_0 - \dim_K ( \Ker(\times x) + \Ker(\times y)) +\dim_K Q_0\\
&=&\dim_K P_0  + \dim_K Q_0 -(r+s),\\
 \dim_K  \overline{P_1}&=& \dim_K P_1 - \dim_K ( y\Ker(\times x) + x\Ker(\times y)) +\dim_K Q_1\\
&=&\dim_K P_1  + \dim_K Q_1 -(r+s),
\end{eqnarray*}
where $r=\dim_K ( \Ker(\times x)$ and $s=\dim_K ( \Ker(\times y)$.
\end{proof}
\begin{rem} The converse of Theorem \ref{thmind} is also true if $M$ has a non-decreas\-ing Hilbert function. In fact, let $\ell$ be a Lefschetz element of $M$, we have that the multiplication by this linear form is injective. Hence if there exists a submodule $N$ of $M$ such that $HF_N$ decreases in two consecutive degrees, then the multiplication map $\times\ell_{|_{N}}$ is not injective. So we get that every submodule of $M$ has a non-decreasing Hilbert function.
 \end{rem}

Now we aim to show that if $M$ is a graded  indecomposable S-module with Hilbert function $HF_M=(h_0,h_1)$, then $M$ has the WLP.
\begin{lem}\label{lemind2}
Let $M$ be a graded indecomposable S-module with a non-decreas\-ing Hilbert function, $HF_M=(h_0,h_1)$. Then every submodule of $M$ has a non-decreasing Hilbert function.
\end{lem}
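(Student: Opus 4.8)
The plan is to recognize $M$ as a representation of the Kronecker quiver and to reduce the whole statement to the existence of a single injective multiplication. Since $M$ is concentrated in degrees $0$ and $1$, the $S$-module structure on $M=M_0\oplus M_1$ is nothing but the pair of linear maps $X=(\times x)$ and $Y=(\times y)$ from $M_0$ to $M_1$ (multiplication by any degree-$2$ form is automatically zero), i.e.\ the datum of the matrix pencil $\lambda X+\mu Y$. A submodule $N\subseteq M$ is exactly a pair $N_0\subseteq M_0$, $N_1\subseteq M_1$ with $XN_0+YN_0\subseteq N_1$. The key claim I would isolate is the following: \emph{if $M$ is indecomposable with $h_0\le h_1$, then there is a linear form $\ell\in S_1$ for which $\times\ell\colon M_0\to M_1$ is injective.} Granting this, the lemma is immediate, and no reduction to submodules generated in degree $0$ is needed: for any submodule $N$ the map $\times\ell$ is injective on $N_0\subseteq M_0$ and sends $N_0$ into $N_1$, whence $\dim_K N_1\ge\dim_K(\ell N_0)=\dim_K N_0$, so $HF_N$ is non-decreasing.

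To prove the claim I would appeal to the classification of indecomposable matrix pencils (equivalently, of indecomposable representations of the Kronecker quiver). Up to change of bases in $M_0$ and $M_1$, an indecomposable pencil is of exactly one of three types: the preinjective blocks of dimension vector $(n+1,n)$, the preprojective blocks of dimension vector $(n,n+1)$, and the regular blocks of dimension vector $(n,n)$ (a single Jordan block attached to a point of $\mathbb{P}^1$). The hypothesis $h_0=\dim_K M_0\le\dim_K M_1=h_1$ rules out the preinjective blocks, since these are precisely the indecomposables with strictly decreasing dimension vector. In the preprojective case the pencil has full column rank for every $[\lambda:\mu]$: in the standard model $M_0=K[t]_{\le n-1}$, $M_1=K[t]_{\le n}$ with $X$ the inclusion and $Y$ multiplication by $t$, multiplication by $\lambda+\mu t$ is injective for every nonzero $(\lambda,\mu)$, so every nonzero $\ell$ works. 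In the regular case the pencil is square with $\det(\lambda X+\mu Y)\not\equiv 0$, so $\times\ell$ is invertible, hence injective, for every $\ell$ outside the single eigenvalue point (and such an $\ell$ exists because $K$ is infinite). Either way an injective $\ell$ exists.

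The main obstacle is exactly this classification input used to produce the injective $\ell$; everything else is routine linear algebra. If one prefers to keep the argument self-contained rather than quoting Kronecker's normal form, the difficulty relocates to proving the contrapositive of the key claim directly: if $\times\ell$ fails to be injective for every $\ell$ while $h_0\le h_1$, then $M$ decomposes. Concretely I would choose, among all subspaces $W\subseteq M_0$ with $\dim_K(XW+YW)<\dim_K W$, one of least dimension, argue that the induced sub-pencil on $W$ is a single ``string'' block, and then split this block off as a direct summand of $M$ by extending a compatible basis across $M_0$ and $M_1$. Constructing this complement is the delicate point, and it is here that indecomposability is genuinely contradicted. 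A lighter route, sufficient for the lemma as stated, is to verify the non-decreasing property of submodules directly on each normal form: for the preprojective model one checks that $\dim_K(W+tW)\ge\dim_K W+1$ for every nonzero $W\subseteq K[t]_{\le n-1}$ (a finite-dimensional space cannot be stable under multiplication by $t$), and for the regular model one fixes an invertible member $Z=\lambda_1 X+\mu_1 Y$ of the pencil and notes that $Z(W)\subseteq XW+YW$ forces $\dim_K(XW+YW)\ge\dim_K W$.
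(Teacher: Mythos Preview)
Your argument is correct, but it proceeds quite differently from the paper. The paper gives a self-contained induction on $h_0-r$, where $r=\dim_K N_0$: assuming some submodule $N$ has $\dim_K N_0>\dim_K N_1$, it enlarges $N$ by one generator at a time and uses the induction hypothesis to force $\langle e\rangle\cap N=0$ for every $e\in M_0\setminus N_0$, eventually producing a direct-sum decomposition $M=N\oplus\langle M_0\setminus N_0\rangle$ that contradicts indecomposability. No classification of pencils is invoked, and the WLP itself is not used; the lemma is established first and only afterwards combined with the algorithm of Section~\ref{sec algo} (Theorem~\ref{thmind}) to obtain the WLP in Theorem~\ref{thmind2}.

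Your route reverses this logic: you first prove the WLP (existence of an injective $\times\ell$) by appealing to the Kronecker/quiver classification of indecomposable pencils, and then read off the non-decreasing Hilbert function of every submodule as an immediate corollary. This is shorter and more conceptual, and in fact it yields Theorem~\ref{thmind2} directly without passing through Theorem~\ref{thmind}; the price is the external input of Kronecker's normal form, which you rightly flag as the one nontrivial step. Your fallback sketches (splitting off a minimal ``bad'' $W$, or checking the property on each normal form) are reasonable, but note that the first of these is essentially re-proving part of the classification, and the second still presupposes it. The paper's induction, by contrast, avoids the classification entirely and stays within elementary linear algebra, at the cost of a more delicate bookkeeping argument.
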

\begin{proof}
Let $N$ be a submodule of $M$ with $HF_N=(r,s)$, we can assume that $N $ has minimal generators only in degree zero. We prove the statement by induction on $n-r$ where $n=h_0$.

For the case $r=n$, the statement holds obviously, because all minimal generators of $M$ are in degree zero, then this implies $M=N$. Assume that the statement is true for each submodule minimally generated by $r > t$ elements, we claim that it true for the case $r=t$.

If $r > s $, by the induction hypothesis the statement is true for the submodule $N + \langle e \rangle $, for each $e\in M_0 \backslash N_0$. This means that $xe$ and $ye$ are linearly independent and the submodule generated by $e$ does not intersect with $N$, i.e. $\langle e\rangle \cap N = (0)$. Hence $s = r-1$.

Next we claim that $M=N\oplus \langle M_0\backslash N_0 \rangle$, which contradicts to the hypothesis on $M$. In fact, if $m \in N\cap \langle M_0 \backslash N_0 \rangle$, then $m \in M_1$, so $m$ is not a minimal generator. Since $m \in N$, there is $e_A\in N_0$ such that $\ell_A e_A = m$. Similarly, since $m\in \langle M_0\backslash N_0 \rangle$, there is $e_B\in  M_0\backslash N_0$ such that $\ell_B e_B = m$, for some $\ell_A, \ell_B \in S_1$.

By using the same argument for the submodules $N + \langle e_B \rangle$, we get a contradiction. This concludes the proof.
\end{proof}

Now we are able to prove the main result of this section:
\begin{thm}\label{thmind2}
Let $M$ be a graded indecomposable S-module with Hilbert function $HF_M=(h_0,h_1)$. Then $M$ has the WLP.
\end{thm}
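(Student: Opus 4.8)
The plan is to reduce the statement to Lemma \ref{lemind2} and Theorem \ref{thmind}, after normalizing the shape of the Hilbert function by duality, and I split into the cases $h_0 \le h_1$ and $h_0 > h_1$. When $h_0 \le h_1$ the Hilbert function of $M$ is non-decreasing, so Lemma \ref{lemind2} applies verbatim and tells us that every submodule of $M$ has a non-decreasing Hilbert function. Feeding this into Theorem \ref{thmind} yields at once that $M$ has the WLP; the borderline case $h_0 = h_1$ is subsumed here. Thus in the non-decreasing regime nothing beyond the two preceding results is needed.

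For the strictly decreasing case $h_0 > h_1$ the Lefschetz map $\times\ell : M_0 \to M_1$ must instead be surjective, and I would produce it by dualizing. Since $M = M_0 \oplus M_1$ has finite length, I set $N_0 = \Hom(M_1, K)$ and $N_1 = \Hom(M_0, K)$, letting each variable of $S$ act on $N$ by the transpose of its action on $M$; then $N$ is again a standard graded $S$-module and $HF_N = (h_1, h_0)$ is non-decreasing. The transfer principle is the elementary observation that, for every $\ell \in S_1$, the map $\times\ell : M_0 \to M_1$ is surjective if and only if its transpose $\times\ell : N_0 \to N_1$ is injective. Consequently $M$ has the WLP precisely when $N$ does, and applying the non-decreasing case to $N$ finishes the argument.

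The step I expect to be the main obstacle is verifying that $N$ is again \emph{indecomposable}, which is exactly the hypothesis Lemma \ref{lemind2} requires. I would settle this by recalling that the graded $K$-dual is an exact additive contravariant anti-equivalence on finite-length graded $S$-modules: it sends $M = A \oplus B$ to $N = B^\vee \oplus A^\vee$ and reflects direct-sum decompositions, so $M$ is indecomposable exactly when $N$ is. Equivalently, the data of $M$ is precisely a representation of the Kronecker quiver $\bullet \rightrightarrows \bullet$, and passing to $N$ reverses both arrows, producing a representation of the opposite quiver, which is isomorphic to the Kronecker quiver itself; under this identification indecomposability is manifestly preserved. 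This is the only place where the proof goes beyond the two results already available, and once it is in hand the decreasing case collapses to the non-decreasing one.
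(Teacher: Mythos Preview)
Your proof is correct and follows essentially the same route as the paper: the case $h_0\le h_1$ is handled by combining Lemma~\ref{lemind2} with Theorem~\ref{thmind}, and the case $h_0>h_1$ is reduced to the first by passing to the graded $K$-dual. The only difference is that the paper dispatches the preservation of indecomposability under duality by a citation, whereas you spell it out via the anti-equivalence (equivalently, the Kronecker-quiver picture); both are valid.
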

\begin{proof}
If $h_0\leq h_1$, it is followed from Lemma \ref{lemind2} and Theorem \ref{thmind}.

For the case $h_0>h_1$, the dual module $\Hom_K(M,K)$ of $M$ will be an indecomposable module with a non-decreasing Hilbert function, see \cite{K}. Therefore, $\Hom_K(M,K)$ has the WLP, hence $M$ has WLP.
\end{proof}
Theorem \ref{thmind2} is false if we consider an indecomposable module with a long enough Hilbert function. This can be seen by the following example in which an indecomposable module with a Hilbert function of length 4 does not have the WLP.

\begin{exa}
Let $M= (( y, x^4 ) + I)/I\subset S/I$ where the degree are shifted to 1 and $I = (y^3, x^2y^2)+ (x,y)^6.$
The Hilbert function of $M$ is
$$
HF_M=(1,2,2,2,2).
$$
Then $M$ does not have the WLP. In fact, $M$ has a minimal generator of degree 4, so the multiplication map by any linear form from $M_3$ to $M_4$ can not be surjective because the minimal generator $x^4+I$ is not an image of any element in $M_3$. Since the Hilbert function $HF_M(3)=HF_M(4)=2$, this multiplication map is not injective. Furthermore, we can prove that $M$ is indecomposable. In fact, suppose that $M = N_1 \oplus N_2$, then the indecomposable submodule generated by $y+I$ must be contained in one of these components, say $\langle y\rangle \subseteq N_1$ .

It is clear that $x^4+I$ is not in $N_1$, but neither in $N_2$, otherwise $x^4y+I \in N_1 \cap N_2$. Therefore, $x^4+I = (n_1+I) + (n_2 +I)\in N_1 \oplus N_2$.

Since $HF_{N_1}(3)=1$,  we get that $n_1+I = \alpha x^3y+I$, $\alpha \in k$, then $n_2+I= x^4 - \alpha x^3y +I$. This contradicts to the fact that $ yn_2+I =x^4y +I\in N_1$.
\end{exa}

\section*{Acknowledgements}

The main work of this article was performed while the authors were participating
in the summer school PRAGMATIC 2011 in Catania, Sicily. The authors
would like to thank the organizers of PRAGMATIC 2011 for their support and hospitality. We are
grateful to Mats Boij and Ralf Fr\"oberg for their suggestion to study this
topic, and for sharing knowledge, insights and experience in many helpful discussions.


\begin{thebibliography}{30}

\bibitem{CN}
D.\ Cook~II and U.\ Nagel,
\emph{The weak Lefschetz property, monomial ideals, and lozenges}.
Illinois J.\ Math (to appear).

\bibitem{HMNW}
T. Harima, J. Migliore, U. Nagel and J. Watanabe,
\emph{The weak and strong Lefschetz properties for Artinian $K$-algebras}.
J. Algebra {\bf 262} (2003), no. 1, 99--126.

\bibitem{K} Henning Krause,
\emph{An axiomatic description of a duality for modules}. Adv. Math. {\bf 130} (1997), no. 2, 280--286.

\bibitem{MZ}
J. Migliore and F. Zanello,
\emph{The Hilbert functions which force the weak Lefschetz property}. J. Pure Appl. Algebra {\bf 210} (2007), no. 2, 465--471.

\bibitem{MMN}
J. Migliore, R. Mir\'o-Roig and U. Nagel,
\emph{Monomial ideals, almost complete intersections and the weak Lefschetz property}.
Trans. Amer. Math. Soc. {\bf 363} (2011), no. 1, 229--257.

\bibitem{ZZ}
F. Zanello J. Zylinski,
\emph{Forcing the strong Lefschetz and the maximal rank properties}.
J. Pure Appl. Algebra {\bf 213} (2009), no. 6, 1026--1030.

\end{thebibliography}
\end{document}